\theoremstyle{theorem}
\newtheorem{theorem}{Theorem}
\theoremstyle{definition}
\newtheorem{example}[theorem]{Example}
\theoremstyle{remark}
\newtheorem{remark}[theorem]{Remark}
\title{Sums of powers of integers and hyperharmonic numbers}
\author[J.L. Cereceda]{Jos\'e Luis Cereceda}
\address{%
        Collado Villalba, 28400 -- Madrid, Spain}
\email{jl.cereceda@movistar.es}
\begin{document}

\begin{abstract}
In this paper, we obtain a new formula for the sums of $k$-th powers of the first $n$ positive integers, $S_k(n)$, that involves the hyperharmonic numbers and the Stirling numbers of the second kind. Then, using an explicit representation for the hyperharmonic numbers, we generalize this formula to the sums of powers of an arbitrary arithmetic progression. Furthermore, we express the Bernoulli polynomials in terms of hyperharmonic polynomials and Stirling numbers of the second kind. Finally, we extend the obtained formula for $S_k(n)$ to negative values of $n$.
\end{abstract}

\maketitle

\section{Introduction}

The $n$-th hyperharmonic number of order $r$, $H_{n}^{(r)}$, is defined recursively as
\begin{equation*}
H_{n}^{(r)} = \sum_{k=1}^{n} H_{k}^{(r-1)} \,\,\text{and}\,\, H_{n}^{(1)} = H_n,
\end{equation*}
where $H_n$ is the ordinary harmonic number $1 + \frac{1}{2} + \cdots + \frac{1}{n}$. Here it is understood that $H_{n}^{(0)}= \frac{1}{n}$ for $n \geq 1$, and $H_0^{(r)} =0$ for $r \geq 0$. In 1996, Conway and Guy \cite[p. 258]{guy} provided the following identity
\begin{equation}\label{guy}
H_{n}^{(r)} = \binom{n+r-1}{r-1} \big( H_{n+r-1} - H_{r-1} \big),
\end{equation}
connecting the hyperharmonic numbers with the harmonic numbers. There exist various proofs of formula \eqref{guy} in the literature; see, e.g., \cite{benjamin,cere2} and \cite[pp. 227--229]{mezo}.

In this paper, we obtain a new formula for the sums of powers of the first $n$ positive integers, $S_k(n) = 1^k + 2^k + \cdots + n^k$, in terms of hyperharmonic numbers and Stirling numbers of the second kind $\genfrac{\{}{\}}{0pt}{}{k}{j}$. Specifically, in Section 2, we establish the following theorem.

\begin{theorem}\label{th:1}
For any integers $k \geq 0$ and $n \geq 1$, we have
\begin{equation}\label{th1}
S_{k}(n) = \frac{(-1)^{k+1}}{k+1} \sum_{j=1}^{k+1} (-1)^{j} j! \genfrac{\{}{\}}{0pt}{}{k+1}{j} \left( H_{j+1}^{(n)} -
\frac{1}{j+1} \right).
\end{equation}
\end{theorem}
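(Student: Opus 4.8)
The plan is to expand both sides of \eqref{th1} in the common basis $\bigl\{\binom{n+p}{p+1}:p=0,1,\dots,k\bigr\}$ of the space of polynomials in $n$ of degree at most $k+1$ that vanish at $n=0$. For the left-hand side, I start from the classical expansion $x^{k}=\sum_{p=0}^{k}\genfrac{\{}{\}}{0pt}{}{k}{p}x^{\underline{p}}$ in falling factorials $x^{\underline{p}}=x(x-1)\cdots(x-p+1)$; replacing $x$ by $-x$ and using $(-x)^{\underline{p}}=(-1)^{p}x^{\overline{p}}$, with $x^{\overline{p}}=x(x+1)\cdots(x+p-1)$, gives $m^{k}=\sum_{p=0}^{k}(-1)^{k-p}\genfrac{\{}{\}}{0pt}{}{k}{p}m^{\overline{p}}$. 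Summing over $m=1,\dots,n$ and using the hockey-stick identity $\sum_{m=1}^{n}m^{\overline{p}}=p!\binom{n+p}{p+1}$ then yields
\begin{equation}\label{pf-Sk}
S_{k}(n)=\sum_{p=0}^{k}(-1)^{k-p}p!\genfrac{\{}{\}}{0pt}{}{k}{p}\binom{n+p}{p+1}.
\end{equation}

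For the right-hand side, I would use the explicit representation $H_{j+1}^{(n)}=\sum_{i=1}^{j+1}\frac{1}{i}\binom{n+j-i}{n-1}$, which follows by a short induction on the order $n$ via the hockey-stick identity. Since its $i=j+1$ term equals $\frac{1}{j+1}\binom{n-1}{n-1}=\frac{1}{j+1}$, reindexing by $p=j-i$ gives
\begin{equation}\label{pf-H}
H_{j+1}^{(n)}-\frac{1}{j+1}=\sum_{i=1}^{j}\frac{1}{i}\binom{n+j-i}{n-1}=\sum_{p=0}^{j-1}\frac{1}{j-p}\binom{n+p}{p+1},
\end{equation}
and this is precisely why the correction $-\frac{1}{j+1}$ appears in \eqref{th1}. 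Substituting \eqref{pf-H} into \eqref{th1} and interchanging the order of summation over $j$ and $p$ rewrites the right-hand side of \eqref{th1} as $\frac{(-1)^{k+1}}{k+1}\sum_{p=0}^{k}\binom{n+p}{p+1}\sum_{j=p+1}^{k+1}\frac{(-1)^{j}j!}{j-p}\genfrac{\{}{\}}{0pt}{}{k+1}{j}$. In view of \eqref{pf-Sk}, the theorem thus reduces to the Stirling-number identity
\begin{equation}\label{pf-key}
\sum_{j=p+1}^{k+1}\frac{(-1)^{j}j!}{j-p}\genfrac{\{}{\}}{0pt}{}{k+1}{j}=(-1)^{p+1}(k+1)p!\genfrac{\{}{\}}{0pt}{}{k}{p}\qquad(0\le p\le k),
\end{equation}
together with routine sign-chasing.

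Identity \eqref{pf-key} is the heart of the argument and, I expect, the only real obstacle. I would prove it by comparing exponential generating functions in the variable $K:=k+1$. Using $\sum_{K\ge j}\genfrac{\{}{\}}{0pt}{}{K}{j}\frac{x^{K}}{K!}=\frac{(e^{x}-1)^{j}}{j!}$, the generating function of the left-hand side of \eqref{pf-key} is
\[
\sum_{j\ge p+1}\frac{(-1)^{j}}{j-p}(e^{x}-1)^{j}=(-1)^{p}(e^{x}-1)^{p}\sum_{t\ge1}\frac{(-1)^{t}}{t}(e^{x}-1)^{t}=(-1)^{p+1}x\,(e^{x}-1)^{p},
\]
where the last step uses $\sum_{t\ge1}\frac{(-1)^{t-1}}{t}u^{t}=\ln(1+u)$ with $u=e^{x}-1$, so that $\ln(1+u)=x$. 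On the other hand, the generating function of the right-hand side of \eqref{pf-key} is $(-1)^{p+1}p!\,x\sum_{m\ge0}\genfrac{\{}{\}}{0pt}{}{m}{p}\frac{x^{m}}{m!}=(-1)^{p+1}x\,(e^{x}-1)^{p}$, and the two agree. The pleasant feature of this computation is that the alternating tail $\sum_{t\ge1}\frac{(-1)^{t-1}}{t}(e^{x}-1)^{t}$ collapses to $x$; it is exactly this collapse that accounts for the factor $k+1$ and for the single, lower-order Stirling number $\genfrac{\{}{\}}{0pt}{}{k}{p}$ on the right of \eqref{pf-key}.
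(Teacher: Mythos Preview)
Your proof is correct and takes a genuinely different route from the paper's. Both arguments start from the same expansion of $S_k(n)$ in the binomial basis $\binom{n+p}{p+1}$ (your first displayed formula, which is equation~\eqref{cere} in the paper), but they diverge from there. The paper differentiates $\binom{n+j}{j+1}$ with respect to $n$ and recognizes, via the Conway--Guy identity~\eqref{guy}, that the derivative is exactly $H_{j+1}^{(n)}$; combining this with the classical relation $S_{k}'(n)=kS_{k-1}(n)+(-1)^{k}B_{k}$ and then using $S_k(0)=0$ to eliminate the Bernoulli number yields~\eqref{th1} without any auxiliary Stirling-number computation. Your approach instead stays algebraic and coefficient-wise: you also expand $H_{j+1}^{(n)}-\tfrac{1}{j+1}$ in the same binomial basis via the explicit representation~\eqref{frac}, interchange sums, and reduce the theorem to the Stirling identity $\sum_{j=p+1}^{k+1}\frac{(-1)^{j}j!}{j-p}\genfrac{\{}{\}}{0pt}{}{k+1}{j}=(-1)^{p+1}(k+1)\,p!\,\genfrac{\{}{\}}{0pt}{}{k}{p}$, whose EGF proof via the collapse $\ln\bigl(1+(e^{x}-1)\bigr)=x$ is clean and self-explanatory. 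The paper's argument explains conceptually \emph{why} hyperharmonic numbers enter (they are derivatives of binomial coefficients) and needs no side identity; your argument is more self-contained---it uses neither the derivative relation involving Bernoulli numbers nor the Conway--Guy formula---at the price of isolating and proving that Stirling identity.
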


As will become clear below, $H_{j+1}^{(n)}$ is a polynomial in $n$ of degree $j$. Then, in Section 3, using the explicit representation for the hyperharmonic numbers given in \eqref{frac}, we generalize formula \eqref{th1} to the sums of powers of the terms of an arithmetic progression with first term $r$ and common difference $m$
\begin{equation}\label{prog}
S_{k}^{r,m}(n) = \sum_{j=1}^{n} ( r + (j-1) m )^k,
\end{equation}
where $m$ and $r$ are assumed to be integer variables with $m \geq 1$ and $r \geq 0$. Furthermore, in Section 4, we express the Bernoulli polynomials $B_k(x)$ in terms of the hyperharmonic polynomials $\mathcal{H}_j(x):= H_{j+1}^{(x-1)}$, $j =0,1,\ldots,k$, and the Stirling numbers of the second kind (see equation \eqref{con2}). Finally, in Section 5, we extend the above formula in equation \eqref{th1} to negative values of $n$ by making use of the definition of hyperharmonic numbers of negative order set forth by Dil and Muniro\u{g}lu in \cite{dil3}.

\section{Proof of Theorem \ref{th:1}}

Next, we detail the proof of Theorem \ref{th:1}.

\begin{proof}
We start with the following polynomial formula for $S_k(n)$; see, e.g., \cite[Equation (7.5)]{gould2}:
\begin{equation}\label{cere}
S_k(n) = \sum_{j=1}^{k} a_{k,j} \binom{n+j}{j+1},  \quad k \geq 1,
\end{equation}
where the coefficients $a_{k,j}$ are given by
\begin{equation}\label{coeff}
a_{k,j} = (-1)^{k-j} j! \genfrac{\{}{\}}{0pt}{}{k}{j}.
\end{equation}

For convenience, we can think of $n$ as being a continuous variable. This is justified by the fact that, as is well-known, for fixed $k$ there is exactly one polynomial $S_k(x)$ in $x$ such that $S_k(x) = 1^k + 2^k + \cdots + x^k$ whenever $x$ is a positive integer (see, e.g., \cite[Theorem 1]{levy} and \cite{owens}). Keeping this in mind, we invoke the following elementary result according to which (see, e.g., \cite{owens,sher,wu}):
\begin{equation}\label{deriv}
S_{k}^{\prime}(n) = \frac{\text{d}S_k(n)}{\text{d}n} = k S_{k-1}(n) + (-1)^k B_k, \quad k \geq 1,
\end{equation}
where the $B_k$'s are the Bernoulli numbers \cite{apostol}. Thus, recalling the rule for the derivative of a product of functions $h_1(x), h_2(x),
\ldots, h_j(x)$,
\begin{equation*}
\frac{\text{d}}{\text{d}x} \left( \prod_{i=1}^{j} h_i(x) \right) = \left( \prod_{i=1}^{j} h_i(x) \right) \left( \sum_{i=1}^{j}
\frac{h_{i}^{\prime}(x)}{h_i(x)} \right),
\end{equation*}
and differentiating each side of equation \eqref{cere} with respect to $n$, we obtain
\begin{align*}
S_{k}^{\prime}(n) & = \frac{\text{d}}{\text{d}n} \left( \sum_{j=1}^{k} a_{k,j}\binom{n+j}{j+1} \right)  \\
& = \sum_{j=1}^{k} \frac{a_{k,j}}{(j+1)!} \frac{\text{d}}{\text{d}n} \left( \prod_{i=0}^{j} (n+i) \right)  \\
& = \sum_{j=1}^{k} \frac{a_{k,j}}{(j+1)!} \left( \prod_{i=0}^{j} (n+i) \right) \sum_{i=0}^{j} \frac{1}{n+i} \\
& = \sum_{j=1}^{k} a_{k,j} \binom{n+j}{j+1} \big( H_{n+j} - H_{n-1} \big).
\end{align*}
By virtue of identity \eqref{guy}, the last equation reduces to
\begin{equation}\label{pr2}
S_{k}^{\prime}(n) = \sum_{j=1}^{k} a_{k,j} H_{j+1}^{(n)}, \quad k \geq 1.
\end{equation}
Now, combining equations \eqref{pr2}, \eqref{deriv}, and \eqref{coeff}, and renaming the index $k$ as $k+1$, yields
\begin{equation}\label{pr3}
S_{k}(n) = \frac{(-1)^{k+1}}{k+1} \left( \sum_{j=1}^{k+1} (-1)^{j} j! \genfrac{\{}{\}}{0pt}{}{k+1}{j} H_{j+1}^{(n)} -
B_{k+1} \right), \quad k \geq 0.
\end{equation}
On the other hand, since $S_{k}(0) =0$, and noting that $H_{j+1}^{(0)} = \frac{1}{j+1}$, from \eqref{pr3} we deduce that
\begin{equation}\label{pr4}
B_{k+1} =  \sum_{j=1}^{k+1} (-1)^j \frac{j!}{j+1} \genfrac{\{}{\}}{0pt}{}{k+1}{j}.
\end{equation}
Therefore, from \eqref{pr4} and \eqref{pr3}, we finally get \eqref{th1}.
\end{proof}

\begin{remark}
Let $D_x$ be the derivative operator with respect to $x$, i.e., $D_x f(x) = \frac{\text{d}}{\text{d}x} f(x)$. The above proof of Theorem \ref{th:1} involves essentially an application of the formula giving the hyperharmonic number $H_n^{(r)}$ as the derivative of a binomial coefficient, namely (see \cite[Section 3]{cere2} and \cite[Proposition 11]{dil3})
\begin{equation*}
D_x \left. \binom{x+n+r-1}{n} \right|_{x=0} = H_n^{(r)},
\end{equation*}
which is in turn a generalization of the equation \cite[Equation (8)]{paule} (see also \cite[Equation (Z.60)]{gould})
\begin{equation*}
D_x \left. \binom{x+n}{n} \right|_{x=0} = H_n.
\end{equation*}
\end{remark}

\begin{remark}
Equation \eqref{pr4} is a well-known property of the Bernoulli numbers (see, e.g., \cite{qi}).
\end{remark}

\begin{remark}
Letting $n =1$ in equation \eqref{pr3} yields the identity
\begin{equation*}
B_k = (-1)^{k+1} k + \sum_{j=0}^{k} (-1)^j j! \genfrac{\{}{\}}{0pt}{}{k}{j} H_{j+1}, \quad k \geq 0.
\end{equation*}
\end{remark}

As a simple example illustrating Theorem \ref{th:1}, we may use equation \eqref{th1} to calculate $S_3(n)$.
\begin{example}
For $k =3$, equation \eqref{th1} reads as
\begin{align}\label{exam}
S_3(n) & = \frac{1}{4} \sum_{j=1}^{4} (-1)^j j! \genfrac{\{}{\}}{0pt}{}{4}{j} \left( H_{j+1}^{(n)} - \frac{1}{j+1} \right)  \notag \\
& = 6 H_{5}^{(n)} - 9 H_{4}^{(n)} + \frac{7}{2} H_{3}^{(n)} - \frac{1}{4} H_{2}^{(n)} + \frac{1}{120}.
\end{align}
In order to evaluate the involved hyperharmonic numbers $H_{j}^{(n)}$, it is useful to employ the following explicit formula derived in \cite[Theorem 1]{benjamin} and, additionally, in \cite[Theorem 5]{dil}
\begin{equation}\label{frac}
H_{j}^{(n)} = \sum_{t=1}^{j} \binom{n+j-t-1}{j-t} \frac{1}{t}, \quad n,j \geq 1,
\end{equation}
which gives $H_{j}^{(n)}$ as a weighted sum of the fractions $\frac{1}{1}, \frac{1}{2}, \ldots, \frac{1}{j}$. From \eqref{frac}, it is easily seen that $H_{j}^{(n)}$ is a polynomial in $n$ of degree $j-1$ with leading coefficient $\frac{1}{(j-1)!}$ and constant term $\frac{1}{j}$. Applying \eqref{frac}, we obtain
\begin{align*}
H_{2}^{(n)} & = n + \tfrac{1}{2},  \\
H_{3}^{(n)} & =\tfrac{1}{2}n^2 + n + \tfrac{1}{3},  \\
H_{4}^{(n)} & = \tfrac{1}{6}n^3 + \tfrac{3}{4}n^2 + \tfrac{11}{12}n + \tfrac{1}{4}, \\
H_{5}^{(n)} & = \tfrac{1}{24}n^4 +  \tfrac{1}{3}n^3 + \tfrac{7}{8}n^2 + \tfrac{5}{6}n + \tfrac{1}{5}.
\end{align*}
Substituting these expressions into \eqref{exam} and simplifying, we find that, as expected, $S_3(n) = \tfrac{1}{4} n^2 (n+1)^2$.
\end{example}

\begin{remark}
By using equation \eqref{frac} into \eqref{th1}, we can equivalently express $S_k(n)$ as a weighted sum of $\frac{1}{1}, \frac{1}{2}, \ldots,
\frac{1}{k+1}$ as follows
\begin{equation}\label{rem1}
S_k(n) = \frac{(-1)^{k+1}}{k+1} \sum_{t=1}^{k+1} V_{k,t}(n) \frac{1}{t},  \quad k \geq 0,
\end{equation}
where
\begin{equation}\label{rem2}
V_{k,t}(n) = \sum_{i=t}^{k+1} (-1)^i i! \genfrac{\{}{\}}{0pt}{}{k+1}{i} \binom{n+i-t}{i+1-t}.
\end{equation}
\end{remark}

\begin{remark}
As shown in \cite[Theorem 2]{benjamin}, the hyperharmonic numbers, $H_{n}^{(r)}$, and the $r$-Stirling numbers of the first kind,
$\genfrac{[}{]}{0pt}{}{n}{k}_r$, are related by
\begin{equation*}\label{benj}
H_{n}^{(r)} = \frac{1}{n!} \genfrac{[}{]}{0pt}{}{n+r}{r+1}_r.
\end{equation*}
Therefore, from Theorem \ref{th:1}, we can alternatively write $S_k(n)$ in the form
\begin{equation}\label{r1}
S_{k}(n) = \frac{(-1)^{k+1}}{k+1} \sum_{j=1}^{k+1} \frac{(-1)^{j}}{j+1} \genfrac{\{}{\}}{0pt}{}{k+1}{j}
\left( \genfrac{[}{]}{0pt}{}{n+j+1}{n+1}_n -  j! \right).
\end{equation}
This formula is to be complemented by the following one
\begin{equation}\label{r2}
\genfrac{[}{]}{0pt}{}{n+j+1}{n+1}_n = \sum_{i=1}^{j+1} (i-1)! \binom{j+1}{i} n^{\overline{j+1-i}},
\end{equation}
expressing $\genfrac{[}{]}{0pt}{}{n+j+1}{n+1}_n$ in terms of the rising factorials $n^{\overline{j+1-i}}$, $i =1,2,\ldots,j+1$. Hence,
substituting \eqref{r2} into \eqref{r1}, we get
\begin{equation*}
S_{k}(n) = \frac{(-1)^{k+1}}{k+1} \sum_{j=1}^{k+1} \frac{(-1)^{j}}{j+1} \genfrac{\{}{\}}{0pt}{}{k+1}{j}
 \sum_{i=1}^{j} (i-1)! \binom{j+1}{i} n^{\overline{j+1-i}}.
\end{equation*}
\end{remark}

\section{Generalization of Theorem \ref{th:1}}

Next, using the representation for the hyperharmonic numbers given in \eqref{frac}, we generalize Theorem \ref{th:1} to the arithmetic progression defined in \eqref{prog}.

\begin{theorem}
For any integers $k \geq 0$ and $n \geq 1$, we have
\begin{equation}\label{th2}
S_{k}^{r,m}(n) = (-1)^{k+1} \frac{m^k}{k+1} \sum_{t=1}^{k+1} \left( V_{k,t} \left(n-1+\frac{r}{m}\right) -
V_{k,t} \left(\frac{r}{m}-1 \right) \right) \frac{1}{t},
\end{equation}
where $V_{k,t}(n)$ is the polynomial given in \eqref{rem2}.
\end{theorem}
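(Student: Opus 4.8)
The plan is to reduce $S_k^{r,m}(n)$ to a difference of two values of the ordinary power-sum polynomial $S_k(x)$ at shifted (generally rational) arguments, and then to invoke the representation \eqref{rem1}. First I would pull $m^k$ out of each summand in \eqref{prog}, writing $r+(j-1)m = m\bigl(j-1+\tfrac{r}{m}\bigr)$ and re-indexing, so that
\begin{equation*}
S_k^{r,m}(n) = m^k \sum_{i=0}^{n-1}\Bigl(i+\tfrac{r}{m}\Bigr)^k .
\end{equation*}

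Next I would telescope the inner sum using the polynomial $S_k(x)$. Since $S_k(x)-S_k(x-1)-x^k$ vanishes at every positive integer, it is identically zero, so $S_k(x)-S_k(x-1)=x^k$ as polynomials, hence for every real $x$. Writing $a=\tfrac{r}{m}$ and applying this with $x=i+a$,
\begin{equation*}
\sum_{i=0}^{n-1}(i+a)^k = \sum_{i=0}^{n-1}\bigl(S_k(i+a)-S_k(i+a-1)\bigr) = S_k(n-1+a)-S_k(a-1),
\end{equation*}
so that $S_k^{r,m}(n) = m^k\bigl(S_k(n-1+\tfrac{r}{m})-S_k(\tfrac{r}{m}-1)\bigr)$.

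Finally I would apply \eqref{rem1}. Both sides of \eqref{rem1} are polynomials in $n$ (the coefficients $V_{k,t}(n)$ of \eqref{rem2} being polynomials in $n$) that agree at all positive integers, so \eqref{rem1} is a genuine polynomial identity and may be evaluated at the arguments $x=n-1+\tfrac{r}{m}$ and $x=\tfrac{r}{m}-1$. Substituting $S_k(x)=\frac{(-1)^{k+1}}{k+1}\sum_{t=1}^{k+1}V_{k,t}(x)\tfrac1t$ at these two values, subtracting, and multiplying by $m^k$ gives \eqref{th2} directly; the degenerate case $k=0$ (where $S_0^{r,m}(n)=n$) can be checked by hand against the formula.

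The main obstacle — in fact essentially the only subtlety — is the two-fold "polynomial extension" argument: that $S_k(x)-S_k(x-1)=x^k$ holds for all real $x$ (legitimizing the telescoping even though $\tfrac{r}{m}$ need not be an integer), and that \eqref{rem1}, originally proved for integer $n$, holds as an identity of polynomials and may therefore be evaluated at $x=\tfrac{r}{m}-1$ and $x=n-1+\tfrac{r}{m}$. Both rest on the standard fact that a polynomial with infinitely many zeros is identically zero; once these are in place, the remainder is routine bookkeeping with \eqref{rem2}.
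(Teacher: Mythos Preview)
Your proof is correct and follows essentially the same route as the paper: reduce $S_k^{r,m}(n)$ to $m^k\bigl(S_k(n-1+\tfrac{r}{m})-S_k(\tfrac{r}{m}-1)\bigr)$ and then apply \eqref{rem1} as a polynomial identity. The only cosmetic difference is that the paper quotes the shifted-sum identity $\sum_{j=1}^n(j+x)^k=S_k(n+x)-S_k(x)$ from the literature, whereas you derive it yourself via the telescoping relation $S_k(x)-S_k(x-1)=x^k$; you are also more explicit than the paper about justifying the passage from integer to real arguments.
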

\begin{proof}
This follows in a rather straightforward way from the following simple but powerful result derived in \cite{griff}. Let $S_k(x)$ denote the unique polynomial in $x$ such that, for all $n \geq 1$, $S_k(n)$ gives us the sum of powers of the first $n$ positive integers (with $S_k(0) =0$). Then,
for any real number $x$, it turns out that \cite{griff}
\begin{equation}\label{th3}
\sum_{j=1}^{n} (j +x)^k = S_k(n+x) - S_k(x).
\end{equation}
Taking $x = \frac{r}{m} -1$ in \eqref{th3} yields
\begin{equation*}
S_{k}^{r,m}(n) = m^k \left( S_{k} \left(n-1+\frac{r}{m}\right) - S_{k}\left(\frac{r}{m}-1 \right) \right).
\end{equation*}
Hence, using the polynomial formula for $S_k(n)$ given in \eqref{rem1}, we get \eqref{th2}.
\end{proof}

\begin{remark}
Equation \eqref{th2} reduces to \eqref{rem1} when $r =m=1$.
\end{remark}

\section{Bernoulli polynomials}

In this section we derive an expression for the Bernoulli polynomials $B_k(x)$ involving the hyperharmonic polynomials $\mathcal{H}_j(x):= H_{j+1}^{(x-1)}$, $j =0,1,\ldots,k$, and the Stirling numbers of the second kind. Starting from the well-known relationship between $S_k(n)$ and $B_k(n)$, namely
\begin{equation*}
S_k (n) = \frac{1}{k+1} \big( B_{k+1}(n+1) - B_{k+1} \big), \quad k \geq 1,
\end{equation*}
it follows that $B_{k+1}^{\prime}(n+1) = (k+1) S_{k}^{\prime}(n)$. On the other hand, we have that \cite{apostol} $B_{k+1}^
{\prime}(n+1) = (k+1)B_{k}(n+1)$. Therefore, from \eqref{pr2}, we obtain that
\begin{equation}\label{con1}
B_{k}(n+1) = \sum_{j=0}^{k} a_{k,j} H_{j+1}^{(n)}, \quad k \geq 0.
\end{equation}
Clearly, the right-hand side of \eqref{con1} is a polynomial in $n$ of degree $k$. Hence, using equations \eqref{coeff}, \eqref{frac}, and \eqref{con1}, one can naturally extend $B_{k}(n+1)$ to a polynomial $B_k(x)$ in which $x$ takes any real value as follows
\begin{equation}\label{con2}
B_k(x) = \sum_{j=0}^{k} (-1)^{k-j} j! \genfrac{\{}{\}}{0pt}{}{k}{j} \mathcal{H}_{j}(x),  \quad k \geq 0,
\end{equation}
where
\begin{equation*}
\mathcal{H}_{j}(x):= H_{j+1}^{(x-1)} = \frac{1}{j} \left( x -  \frac{1}{j+1} \right) + \sum_{t=1}^{j-1}
\binom{x+j-t-1}{j+1-t} \frac{1}{t}, \quad j \geq 2,
\end{equation*}
with $\mathcal{H}_{0}(x) =1$ and $\mathcal{H}_{1}(x):= H_{2}^{(x-1)} = x - \frac{1}{2}$.

Note that, since $\mathcal{H}_{j}(0) = -\frac{1}{j(j+1)}$ for all $j \geq 1$, it follows from \eqref{con2} that
\begin{equation*}
B_k = (-1)^{k+1} \sum_{j=1}^{k} (-1)^j \frac{(j-1)!}{j+1}\genfrac{\{}{\}}{0pt}{}{k}{j}, \quad k \geq 1,
\end{equation*}
which is a variant of the identity in \eqref{pr4}. This formula for the Bernoulli numbers has recently been derived in \cite[Equation (3)]{jha}. On the other hand, using \eqref{con1} and the difference equation \cite{apostol}, $B_{k}(x+1) - B_{k}(x) = k x^{k-1}$, we obtain the following alternative
formula for $B_k(x)$:
\begin{equation*}\label{kx}
B_k(x) = \sum_{j=0}^{k} (-1)^{k-j} j! \genfrac{\{}{\}}{0pt}{}{k}{j} H_{j+1}^{(x)} - k x^{k-1},  \quad k \geq 0,
\end{equation*}
where
\begin{equation*}
H_{j+1}^{(x)} = \frac{1}{j+1} + \sum_{t=1}^{j} \binom{x+j-t}{j+1-t}\frac{1}{t}, \quad j \geq 1,
\end{equation*}
and $H_{1}^{(x)} =1$.

Let us further note that we can reverse \eqref{con2} to obtain
\begin{equation}\label{conv}
\mathcal{H}_k(x) = \frac{1}{k!} \sum_{j=0}^{k} \genfrac{[}{]}{0pt}{}{k}{j} B_j(x), \quad k \geq 0,
\end{equation}
where $\genfrac{[}{]}{0pt}{}{k}{j}$ are the (unsigned) Stirling numbers of the first kind. Incidentally, setting $x=0$ in \eqref{conv} allows
us to deduce the following recursive formula for the Bernoulli numbers:
\begin{equation*}
\sum_{j=1}^{k} \genfrac{[}{]}{0pt}{}{k}{j} B_j = - \frac{(k-1)!}{k+1}, \quad k \geq 1.
\end{equation*}
A proof of this last identity using the Riordan array method can be found in \cite[p. 288]{sprugnoli}.

We end this section with the following important observation.

\begin{remark}
The $j$-th degree polynomials $\mathcal{H}_j(x):= H_{j+1}^{(x-1)}$ introduced in this paper are closely related to the so-called {\it harmonic polynomials\/} $H_j(x)$ of degree $j$ in $x$ defined in \cite[Equation (28)]{cheon} by the ordinary generating function
\begin{equation*}
\frac{-\ln (1-t)}{t (1-t)^{1-x}} = \sum_{j=0}^{\infty} H_j(x) t^j,
\end{equation*}
where $H_j(0) = H_{j+1}$. Indeed, it turns out that
\begin{align*}
H_j(x) & =  H_{j+1}^{(1-x)} \quad \text{and}\quad H_j(x) = \mathcal{H}_j(2-x),
\intertext{or, conversely,}
H_{j+1}^{(x)} & =  H_j(1-x) \quad \text{and}\quad \mathcal{H}_j(x) = H_j(2-x).
\end{align*}
The harmonic polynomials $H_j(x)$, $j \geq 0$, have, in particular, the explicit representation (see \cite[Theorem 5.4]{cheon})
\begin{equation*}
H_j(x) = \sum_{t=1}^{j+1} \binom{j+1-t-x}{j+1-t} \frac{1}{t},
\end{equation*}
which can be recovered by letting $j \to j+1$ and $n \to 1-x$ in \eqref{frac}.
\end{remark}

\section{Extension of formula \eqref{th1} to negative values of $n$}

In \cite{mezo2}, Mez\H{o} defined the hyperharmonic function $H_z^{(w)}$ involving the Pochhammer symbol $(z)_w$, gamma $\Gamma(w)$ and digamma $\Psi(w)$ functions, as
\begin{equation*}
H_z^{(w)} = \frac{(z)_w}{z \Gamma(w)} \big( \Psi(z +w) - \Psi(w) \big),
\end{equation*}
where $w, z+w \in \mathbb{C} \setminus \mathbb{Z}^{-}$, and $\mathbb{Z}^{-} = \{0, -1, -2, \ldots\, \}$. Based on the hyperharmonic function, Dil \cite{dil2} presented formulas to calculate special values of $H_z^{(w)}$ subjected to the above restriction of $w, z+w \in \mathbb{C} \setminus \mathbb{Z}^{-}$. Subsequently, Dil and Muniro\u{g}lu \cite{dil3} showed a way to define ``negative-ordered hyperharmonic numbers''. According to \cite[Definition 25]{dil3}, for positive integers $n$ and $r$, the hyperharmonic number of negative order $H_n^{(-r)}$ can be defined by
\begin{equation}\label{def}
H_n^{(-r)} =\left\{
              \begin{array}{ll}
                \dfrac{(-1)^r r!}{n^{\underline{r+1}}}, & n >r \geq 1; \\[4mm]
                \displaystyle\sum_{i=0}^{n-1} (-1)^i \binom{r}{i} \dfrac{1}{n-i}, & r \geq n >1; \\[5mm]
                1, & n=1.
              \end{array}
            \right.
\end{equation}
Furthermore, as noted in \cite{dil3}, the identity (see \cite[Equation (1.43)]{gould2})
\begin{equation*}
\sum_{i=0}^{r} (-1)^i \binom{r}{i} \frac{1}{n-i} = \frac{(-1)^r}{(n-r)\binom{n}{r}},
\end{equation*}
ensures the consistency of the definition in \eqref{def}.

Therefore, for $n \geq 1$, we can use \eqref{def} to define $H_{j+1}^{(-n)}$ as follows
\begin{equation}\label{def2}
H_{j+1}^{(-n)} =
\left\{
  \begin{array}{ll}
   \displaystyle\sum_{i=0}^{j} \frac{(-1)^i}{j+1-i} \binom{n}{i}, &  j \geq 1 ; \\
    1, & j=0,
  \end{array}
\right.
\end{equation}
so that the extension of formula \eqref{th1} to negative values of $n$ can effectively be stated as
\begin{equation}\label{ext}
S_{k}(-n) = \frac{(-1)^{k+1}}{k+1} \sum_{j=1}^{k+1} (-1)^{j} j! \genfrac{\{}{\}}{0pt}{}{k+1}{j}
\sum_{i=1}^{j} \frac{(-1)^i}{j+1-i} \binom{n}{i},
\end{equation}
for $k \geq 0$ and $n \geq 1$.

\begin{remark}
It is a well-known fact that $S_1(n)$ is a factor of $S_k(n)$ for all $k \geq 1$, which means that $S_k(-1) =0$ for all $k\geq 1$. We can check from \eqref{ext} that the latter holds true. Indeed, setting $n=1$ in \eqref{ext} gives
\begin{equation*}
S_k(-1) = \frac{(-1)^{k}}{k+1} \sum_{j=1}^{k+1} (-1)^{j} (j-1)! \genfrac{\{}{\}}{0pt}{}{k+1}{j},
\end{equation*}
which is identically equal to zero for $k \geq 1$, according to the identity (A.17) in \cite{boya}.
\end{remark}

\begin{remark}
As we saw in the preceding section the harmonic and hyperharmonic polynomials are related by $H_j(x) = H_{j+1}^{(1-x)}$ or, $H_j(x) = H_{j+1}^{(-(x-1))}$. Since the definition given in \eqref{def2} is valid for any $n \geq 1$, we can licitly use \eqref{def2} to obtain the following representation of the harmonic polynomials introduced in \cite[Section 5]{cheon}
\begin{equation*}
H_j(x) = \sum_{i=0}^{j} \frac{(-1)^i}{j+1-i} \binom{x-1}{i}, \quad j \geq 1,
\end{equation*}
and $H_0(x) =1$. In particular, since $\binom{-1}{i} = (-1)^i$, the last formula yields, as it should be, $H_j(0) = H_{j+1}$.
\end{remark}

To close this paper, it is worth noting the symmetry property of the power sum polynomials $S_k(n)$, namely \cite{newsome}
\begin{equation}\label{sym}
S_k( -(n+1)) = (-1)^{k+1} S_k(n), \quad k \geq 1.
\end{equation}
Thus, using \eqref{th1}, \eqref{ext}, and \eqref{sym}, we can express $S_k(n)$ in the alternative form
\begin{equation*}
S_k(n) = \frac{1}{k+1} \sum_{j=1}^{k+1} (-1)^{j} j! \genfrac{\{}{\}}{0pt}{}{k+1}{j}
\sum_{i=1}^{j} \frac{(-1)^i}{j+1-i} \binom{n+1}{i},
\end{equation*}
for $k \geq 1$ and $n \geq 0$.

\section*{Acknowledgements}

The author is grateful to the anonymous referees for their valuable comments and suggestions which led to improvements to this paper. He would also like to thank one of the referees for providing some pertinent references.

\vspace{2mm}


\begin{thebibliography}{99}


\bibitem{apostol} Apostol, T. M. (2008). A primer on Bernoulli numbers and polynomials. {\it Mathematics Magazine}, 81(3), 178--190.


\bibitem{benjamin} Benjamin, A. T., Gaebler, D., \& Gaebler, R. (2003). A combinatorial approach to hyperharmonic numbers.
{\it Integers}, 3, 1--9. Article \#A15.


\bibitem{boya} Boyadzhiev, K. N. (2018). {\it Notes on the Binomial Transform: Theory and Table with Appendix on Stirling Transform}. World Scientific. Singapore.


\bibitem{cere2} Cereceda, J. L. (2015). An introduction to hyperharmonic numbers. {\it International Journal of Mathematical Education in Science and Technology}, 46(3), 461--469.


\bibitem{cheon} Cheon, Gi.-S., \& El-Mikkawy, M. E. A. (2008). Generalized harmonic numbers with Riordan arrays. {\it Journal of Number Theory}, 128(2), 413--425.


\bibitem{guy} Conway, J. H., \& Guy, R. K. (1996). {\it The Book of Numbers}. Copernicus. New York.


\bibitem{dil} Dil, A., \& Mez\H{o}, I. (2008). A symmetric algorithm for hyperharmonic and Fibonacci numbers. {\it Applied Mathematics and Computation}, 206(2), 942--951.


\bibitem{dil2} Dil, A. (2019). On the hyperharmonic function. {\it S\"{u}leyman Demirel University, Journal of Natural and Applied Sciences}, 23, Special Issue, 187--193.


\bibitem{dil3} Dil, A., \& Muniro\v{g}lu, E. (2020). Applications of derivative and difference operators on some sequences. {\it Applicable Analysis and Discrete Mathematics}, 14(2), 406--430.


\bibitem{gould} Gould, H. W. (1972). {\it Combinatorial Identities: A Standardized Set of Tables Listing 500 Binomial Coefficient Summations}. Morgantown Printing and Binding Co., Morgantown (WV).


\bibitem{gould2} Gould, H. W. (1978). Evaluation of sums of convolved powers using Stirling and Eulerian numbers. {\it The Fibonacci Quarterly}, 16(6), 488--497.


\bibitem{griff} Griffiths, M. (2002). Sums of powers of the terms in any finite arithmetic progression. {\it Mathematical Gazette}, 86(506), 269--271.


\bibitem{jha} Jha, S. K. (2020). Two new explicit formulas for the Bernoulli numbers. {\it Integers}, 20, Article \#A21, 5 pp.


\bibitem{levy} Levy, L. S. (1970). Summation of the series $1^n + 2^n + \cdots + x^n$ using elementary calculus. {\it American Mathematical Monthly}, 77(8), 840--847.


\bibitem{mezo2} Mez\H{o}, I. (2009). Analytic extension of hyperharmonic numbers. {\it Online Journal of Analytic Combinatorics},
4, Article 1, 9 pp.


\bibitem{mezo} Mez\H{o}, I. (2020). {\it Combinatorics and Number Theory of Counting Sequences}. CRC Press. Taylor \& Francis Group. Boca Raton (FL).


\bibitem{newsome} Newsome, N. J., Nogin, M. S., \& Sabuwala, A. H. (2017). A proof of symmetry of the power sum polynomials using a novel Bernoulli number identity. {\it Journal of Integer Sequences}, 20, Article 17.6.6, 10 pp.


\bibitem{owens} Owens, R. W. (1992). Sums of powers of integers. {\it Mathematics Magazine}, 65(1), 38--40.


\bibitem{paule} Paule, P., \& Schneider, C. (2003). Computer proofs of a new family of harmonic number identities. {\it Advances in Applied Mathematics}, 31(2), 359--378.


\bibitem{qi} Qi, F., \& Guo, B.-N. (2014). Alternative proofs of a formula for Bernoulli numbers in terms of Stirling numbers.
{\it Analysis}, 34(3), 311--317.


\bibitem{sher} Sherwood, H. (1970). Sums of powers of integers and Bernoulli numbers. {\it Mathematical Gazette}, 54(389), 272--274.


\bibitem{sprugnoli} Sprugnoli, R. (1994). Riordan arrays and combinatorial sums. {\it Discrete Mathematics}, 132(1--3), 267--290.


\bibitem{wu} Wu, D. W. (2001). Bernoulli numbers and sums of powers. {\it International Journal of Mathematical Education in
Science and Technology}, 32(3), 440--443.


\end{thebibliography}
\end{document}